\newtheorem{lem}{Lemma}
\newtheorem{thm}{Theorem}
\newenvironment{definition}[1][Definition]{\begin{trivlist}
\item[\hskip \labelsep {\bfseries #1}]}{\end{trivlist}}
\newcommand{\field}[1]{\mathbb{#1}}
\newenvironment{proof}[1][Proof:]{\begin{trivlist}
\item[\hskip \labelsep {\bfseries #1}]}{\end{trivlist}}
\title{Indicable Groups and Endomorphic Presentations}
\author{Mustafa G\"okhan Benli}
\begin{document}
\newcommand{\gp}[1]{ \left\langle #1 \right\rangle}
\maketitle
\begin{abstract}

In this note we look at presentations of  subgroups of finitely presented groups with infinite cyclic quotients.
We prove that if $H$ is a finitely generated normal subgroup of a finitely presented group $G$ with $G/H$ cyclic, 
then $H$ has ascending finite  endomorphic presentation. It follows that any finitely presented indicable 
group without free semigroups has the structure of a semidirect product $H \rtimes \field{Z}$ where
$H$ has finite ascending endomorphic presentation.
\end{abstract}

\section{Introduction}
It is a well known  fact that finite index subgroups of finitely presented groups are also finitely presented. 
But once one looks at subgroups of infinite index various possibilities can occur. It may be that the subgroup 
is not  finitely generated but even one can have finitely generated infinitely presented subgroups. A well known 
example is  the kernel of the map $ F_2 \times F_2 \rightarrow \field{Z}$ where each generator is mapped to 1
 (See \cite{MR760871}).

In this note we look at subgroups of finitely presented groups with infinite cyclic quotients.
The Higman embedding theorem \cite{MR0130286}, states that finitely generated subgroups of finitely 
presented groups are exactly the recursively presented groups. In the case when the subgroup has infinite 
cyclic quotient we show that it has a special recursive presentation called a  \textit{finite endomorphic presentation}
(or a finite \textit{L-presentation}). More precisely we prove the following: \par
 \bigskip
\textbf{Theorem 1} Let $G$ be a finitely presented group containing a finitely generated normal subgroup
$H$ such that $G/H$ is infinite cyclic. Then $H$ has ascending finite endomorphic presentation with two free groups
endomoprhisms.
\bigskip

Intuitively, a finite endomorphic presentation is a generalization of a finite presentation in which
the relators of the presentation are obtained by iterating a finite set of initial relators over a finite 
set of endomorphisms of the underlying free group (see next section for a precise definition). It is
yet another way of defining a group with finite data. Such presentations
first arise in the study of  self-similar groups: It was proven by Lysenok in \cite{MR819415}
that the \textit{first Grigorchuk group} $\mathcal{G}$ has the following presentation:

$$\mathcal{G}=\gp{a,b,c,d \mid a^2,b^2,c^2,d^2,bcd,\sigma^i((ad)^4),\sigma^i((adacac)^4),i\geq0}, $$
where $\sigma$ is the substitution
$$
\sigma = \left\{
\begin{array}{ccc}
   a & \mapsto & aca \\
   b & \mapsto  & d \\
   c & \mapsto & b \\
    d & \mapsto & c
  \end{array}
\right.$$

Later more examples of presentations of this kind were found for various groups including iterated
monodromy groups. (See for example \cite{MR2009317}, \cite{MR2035113}, \cite{MR2352271}  and \cite{MR1929716}).
A systematic study of such presentations was done by L. Bartholdi in \cite{MR2009317} who also 
suggested the name \textit{endomorphic presentations}. In the same paper it is also proven that any finitely generated, regular branch self-similar group has such a presentation.

Groups with finite endomorphic presentations embed nicely in finitely presented groups obtained from
the original group via 
finitely many HNN extensions \cite{MR2009317}. The first example of such an embedding was done by
Grigorchuk in \cite{MR1616436} for the group $\mathcal{G}$. Using Lysenok's presentation he showed that $\mathcal{G}$ embeds into the finitely 
presented HNN extension

$$\overline{\mathcal{G}}=\gp{\mathcal{G},t\mid t^{-1}\mathcal{G}t=\sigma(\mathcal{G})} $$

which is amenable but not elementary amenable. This showed that amenable and elementary amenable groups are separated even in the class of finitely
presented groups.

Recall that a group is termed indicable if it has a homomorphism onto the infinite cyclic group. Indicable groups
play an important role in the study of right orderable groups, amenability and bounded cohomology (See 
\cite{MR1222727}, \cite{MR2286034}, \cite{MR667000}).

A theorem of R.Bieri and R.Strebel \cite{MR1243634} (page 67) states that a finitely presented
indicable group not containing a free subgroup of rank 2, is an ascending HNN extension with 
a finitely generated base group. The group $\overline{\mathcal{G}}$ is amenable hence cannot contain 
free subgroup on two generators. It is also indicable. Hence it is a finitely presented indicable
group which is an ascending HNN extension with the finitely generated base group $\mathcal{G}$  that has finite
endomorphic presentation. Motivated by this, Grigorchuk in \cite{MR2195454} asked the following question:

\medskip
Is it correct that a finitely presented indicable group not containing
a free subgroup of rank 2 is an ascending HNN extension of a base group with finite
endomorphic presentation?\medskip

As a corollary of theorem 1, we provide an answer to this question under the stronger assumption that the group
has no free semigroup of rank 2: 
\bigskip

\textbf{Theorem 2} Let G be a  finitely presented indicable group not containing a free semigroup of rank 2. Then 
$G$ has the form of a semidirect product $H \rtimes \field{Z}$ where $H$ has ascending finite endomorphic 
presentation.

The reason why we need the stronger assumption is that in this case the kernel of the homomorphism
onto the infinite cyclic group itself is  finitely generated and hence theorem 1 can be applied.

\section{Definitions and Preliminaries}
Notation: 
\begin{itemize}
 \item If $G$ is a group and $X$ a subset then $\gp{X}$ denotes the subgroup 
of $G$ generated by $X$ and $\gp{X}^\#$ denotes the normal subgroup of $G$ generated
by $X$.
\item $X^\pm$ stands for the set $X \cup X^{-}$.
\item If $Y$ is a set of endomorphisms of a group, $Y^*$ stands for the free monoid generated by $Y$.
i.e. the closure of $\{1\}\cup Y$ under composition.
\item Unless stated otherwise, an equality means equality as words. We will indicate whenever necessary 
that some equality is thought to hold in some group.
\item If $w$ is an element of the free group on a set $X$ and $x \in X$, $exp_x(w)$ denotes the exponent sum
of $x$ in $w$.
\end{itemize}

We will frequently use the following fact also known as W.Dyck's theorem:

If $G$ is a group given as $F/N$ where $F$ is a free group and $N=\gp{R}^\#$ for some $R \subset F$,
then any map $$ \phi :F \longrightarrow H$$ to another group $H$ satisfying 
$\phi(r)=1$ in $H$ for all $r\in R$ induces a well defined group homomorphism
$$\phi: G \longrightarrow H$$

\begin{definition}
 An endomorphic presentation (or an L-presentation) is an expression

\begin{equation}
 \gp{X\mid Q\mid R \mid \Phi}
\end{equation}

where $X$ is a set, $Q,  R $ are subsets of the free group $F(X)$ on the set $X$ and $\Phi$ is a 
set of endomorphisms of $F(X)$. The expression (1) defines a group

$$G=F(X)/N $$ where 
$$N=\gp{Q \cup \bigcup_{\phi \in \Phi^*}\phi(R)}^\# $$

It is called a finite endomorphic presentation (or a finite L-presentation) if $X,Q,R,\Phi$ are all finite and  \textit{ascending} if $Q$ is empty. 
It is called  \textit{invariant} if the endomorphisms in $\Phi$ induce endomorphisms of $G$. Note that ascending L-presentations are invariant, but not all
finite L-presentations are invariant (see \cite{hartung}).
\end{definition}

(Some authors prefer to reserve the name L-presentation to the case where $\Phi$ only contains a single endomorphism.
 We will not make such a distinction and use both names).

Clearly all finite presentations are finite L-presentations. As mentioned in the introduction there are 
groups (such as the Grigorchuk group) which are not finitely presented but finitely L-presented. Also a
counting argument shows that most groups are not finitely L-presented. For general properties of L-presentations see \cite{MR2009317} and also the recent article 
\cite{hartung} where a variant of the Reidemeister-Schreier procedure is proven for finitely L-presented groups.

\medskip

We cite some auxiliary lemmas which we will use later:

\begin{lem} (See \cite{MR1277124})
 If a  group $G$ has no free subsemigroup of rank 2, then for all $a,b \in G$  
the subgroup

$$\gp{b^{-n}ab^n \mid n \in \field{Z}}$$

is finitely generated.

\end{lem}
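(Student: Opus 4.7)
Let $a_n = b^{-n}ab^n$ and $H = \gp{a_n \mid n \in \field{Z}}$; conjugation by $b$ acts on $H$ as the shift $a_n \mapsto a_{n+1}$. The plan is to use the hypothesis to exhibit finitely many ``low-index'' $a_n$ that together determine all the rest.

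First I would establish a normal form for positive words. The identity $b^{-1}a = a_1 b^{-1}$ lets one push every $b^{-1}$ to the right, so any positive word $w$ in $\{a,b^{-1}\}$ rewrites uniquely as $a_{j_1}a_{j_2}\cdots a_{j_p}\cdot b^{-m}$, where $j_i \geq 0$ is the number of $b^{-1}$'s strictly to the left of the $i$th $a$ in $w$ and $m$ is the total number of $b^{-1}$'s; distinct positive words with the same $m$ yield distinct index sequences. The symmetric normal form for positive words in $\{a,b\}$ uses the rule $ba = a_{-1}b$ and produces indices $j_i \leq 0$.

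Next, apply the hypothesis to the pair $\{a,b^{-1}\}$: one obtains distinct positive words $u \neq v$ with $u(a,b^{-1}) = v(a,b^{-1})$ in $G$. Converting both sides to normal form and rearranging yields
\[
(a_{j_1}\cdots a_{j_p})(a_{k_q}\cdots a_{k_1})^{-1} = b^{\,m_u - m_v} \text{ in } G,
\]
with nonnegative indices on the left. If $m_u \neq m_v$, then $H$ contains a nontrivial power $b^d$ of $b$; since conjugation by $b^d$ shifts indices by $d$ and preserves $H$, it follows that $H = \gp{a_0, a_1, \ldots, a_{|d|-1}, b^d}$, a finite set. If $m_u = m_v$, we obtain a nontrivial relation $\prod a_{j_i} = \prod a_{k_i}$ in $H$; isolating a maximum-index occurrence $a_N$ on one side permits solving for $a_N$ as a word in $\{a_j : j < N\}$, and conjugating the resulting formula by $b^k$ gives $a_{N+k}$ as a word in $a_k,\ldots,a_{N+k-1}$, so $\gp{a_n : n \geq 0} = \gp{a_0,\ldots,a_{N-1}}$. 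The symmetric argument applied to $\{a,b\}$ handles the negative side, and the two together give that $H$ is finitely generated.

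The main obstacle is the case $m_u = m_v$: the relation $\prod a_{j_i} = \prod a_{k_i}$ only lets one cleanly solve for $a_N$ when its maximum-index occurrence is suitably isolated, and when $a_N$ appears several times the naive solving step fails. Overcoming this requires either choosing a more specific pair to which the hypothesis is applied (so that the resulting relation has the desired form) or running a combinatorial induction on the multiplicity of $a_N$ in the relation; this is the delicate technical point in the Longobardi--Maj--Rhemtulla argument.
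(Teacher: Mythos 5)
The paper offers no proof of this lemma (it is quoted from \cite{MR1277124}), so I can only assess your argument on its own terms. Your first case is correct and complete: if $m_u \neq m_v$ then $b^{d} \in H$ for $d = m_u - m_v \neq 0$, and since conjugation by $b^{d}$ shifts indices by $d$, indeed $H = \gp{a_0,\ldots,a_{|d|-1}, b^{d}}$. But the case $m_u = m_v$, which you flag yourself, is a genuine gap, and it is the heart of the lemma. With the pair $(a, b^{-1})$ the index sequences are only non-decreasing, so the maximal index $N$ may occur with multiplicities $s \neq t$ on the two sides; cancelling the common tail yields only $a_N^{\,s-t} \in \gp{a_j : j < N}$, and when $s - t \geq 2$ a proper power lying in a subgroup does not place the element there. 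Conjugating that relation by powers of $b$ only controls the powers $a_{N+k}^{\,s-t}$, which is not enough to conclude that $\gp{a_n : n \geq 0}$ is finitely generated. So the argument as written does not close.

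The standard repair is the first of the two options you name: apply the no-free-subsemigroup hypothesis to the pair $(b^{-1},\, ab^{-1})$ instead of $(a, b^{-1})$. A positive word of length $p$ in these letters equals $a^{\epsilon_1}b^{-1}a^{\epsilon_2}b^{-1}\cdots a^{\epsilon_p}b^{-1} = a_0^{\epsilon_1}a_1^{\epsilon_2}\cdots a_{p-1}^{\epsilon_p}\, b^{-p}$ with $\epsilon_i \in \{0,1\}$, so each $a_i$ occurs at most once and with exponent $+1$. Given two distinct such words equal in $G$: if their lengths differ you are back in your first case with $b^{q-p} \in H$; if the lengths agree, let $i_0$ be the largest position where the exponent vectors differ, cancel the (identical) common suffix beyond it, and solve to get $a_{i_0} \in \gp{a_0,\ldots,a_{i_0-1}}$ outright. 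Conjugating by powers of $b^{-1}$ then gives $\gp{a_n : n \geq 0} = \gp{a_0,\ldots,a_{i_0-1}}$, and the symmetric argument with $(b,\, ab)$ handles $n \leq 0$ exactly as you describe. With that substitution your outline becomes a complete proof; without it, the crucial case is unproved.
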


\begin{lem} (See \cite{MR0387420})
Let $G$ be a finitely generated group and $H$ a normal subgroup such that $G/H$ is solvable.
If for all $a,b \in G$ the subgroup  $\gp{b^{-n}ab^n \mid n \in \field{Z}}$
is finitely generated, then $H$ is finitely generated.
\end{lem}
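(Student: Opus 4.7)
The plan is to induct on the derived length $d$ of $G/H$. For the inductive step with $d \ge 2$, let $K \trianglelefteq G$ be the preimage of $(G/H)^{(d-1)}$; then $G/K$ has derived length $d-1$, and the conjugacy hypothesis on $G$ passes to every subgroup of $G$ automatically (the subgroup $\gp{b^{-n}ab^n:n\in\field{Z}}$ is the same whether computed inside $K$ or inside $G$), so the induction hypothesis applied to the pair $(G,K)$ gives that $K$ is finitely generated. Because $K/H = (G/H)^{(d-1)}$ is abelian, the problem reduces to the base case in which $G/H$ is abelian.

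For the base case I would first decompose $G/H \cong \field{Z}^k \oplus T$ with $T$ finite and pass to the preimage $N$ in $G$ of the free part; $N$ has finite index in $G$, hence is finitely generated, and $N/H$ is free abelian of rank $k$. A second induction on $k$, using a tower of surjections $\field{Z}^j \to \field{Z}$ and taking successive preimages, reduces the free abelian case to the case $G/H \cong \field{Z}$.

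The $G/H \cong \field{Z}$ step is where the hypothesis does its real work. Pick $t \in G$ whose image generates $\field{Z}$, and write each generator $g_i$ of $G$ as $g_i = h_i t^{k_i}$ with $h_i \in H$. A Reidemeister--Schreier rewriting (move every $t$ to the right using the identity $ht = t(t^{-1}ht)$) shows that every element of $H$ is a product of elements $t^{-n} h_i^{\pm 1} t^n$, and therefore
\[
H = \gp{\, t^{-n} h_i t^n \,:\, 1 \le i \le m,\ n \in \field{Z}\,}.
\]
Applying the hypothesis with $a = h_i$ and $b = t$, each subgroup $\gp{t^{-n} h_i t^n : n \in \field{Z}}$ is finitely generated, and the resulting finite union is a finite generating set for $H$.

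The only nonroutine point is this final step: the content of the hypothesis is precisely that it collapses the \emph{a priori} countable Schreier generating set for $H$ down to a finite one. All other steps are standard reductions using transfer of the hypothesis to subgroups of $G$, finite-index arguments, and iteration on cyclic quotients.
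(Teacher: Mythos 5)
Your proof is correct. The paper does not actually prove this lemma --- it is quoted from \cite{MR0387420} --- and your argument (induction on the derived length of $G/H$, reduction via finite-index and free abelian quotients to the case $G/H\cong\field{Z}$, where the Schreier generators $t^{-n}h_i t^{n}$ of $H$ lie in the finitely many subgroups $\gp{t^{-n}h_i t^{n}\mid n\in\field{Z}}$, each finitely generated by hypothesis) is exactly the standard one from that source, with all the transfers of the hypothesis to subgroups handled correctly. The only slip is verbal: at the end what is finite is the union of the finite generating sets of those $m$ subgroups, not the union of the subgroups themselves, which is clearly what you meant.
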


Lemma 1 and Lemma 2 together give:

\begin{lem}
Let $G$ be a finitely generated group not containing free subsemigroup of rank 2. If $G/H$ is solvable then $H$
is finitely generated. 
\end{lem}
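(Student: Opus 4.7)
The plan is essentially to chain the two preceding lemmas together, since Lemma~1 is tailored to supply exactly the hypothesis that Lemma~2 requires. Concretely, I would first invoke Lemma~1 on $G$: since $G$ contains no free subsemigroup of rank $2$, for any pair $a, b \in G$ the subgroup $\gp{b^{-n}ab^n \mid n \in \field{Z}}$ is finitely generated. This conclusion is a statement purely about $G$ and makes no reference to $H$.

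Next, I would feed this into Lemma~2. The group $G$ is finitely generated by assumption, $H$ is normal with solvable quotient $G/H$, and the finite generation of every $\gp{b^{-n}ab^n \mid n \in \field{Z}}$ has just been established. Thus all three hypotheses of Lemma~2 are satisfied, and we conclude directly that $H$ is finitely generated.

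There is no real obstacle here; the lemma is stated precisely as the composite implication Lemma~1 $\Rightarrow$ (hypothesis of Lemma~2) $\Rightarrow$ (conclusion of Lemma~2). The only small point to check is that Lemma~1 does not itself require $G$ to be finitely generated (it does not — it holds for any group without a rank-$2$ free subsemigroup), so no compatibility of hypotheses is lost in the chaining. The proof therefore reduces to a one-line appeal to the two cited results.
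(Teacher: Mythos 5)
Your proof is correct and is exactly the paper's argument: the paper introduces Lemma 3 with the remark that ``Lemma 1 and Lemma 2 together give'' it, which is precisely the chaining you describe. The hypothesis checks you perform (including that Lemma 1 needs no finite generation assumption) are all in order.
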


\section{Proof of  Theorems}

\begin{thm}
Let $G$ be a finitely presented group. Let $H$ be a finitely generated normal
subgroup such that $G/H$ is infinite cyclic. Then $H$ has ascending finite L-presentation with two free group endomorphisms.
\end{thm}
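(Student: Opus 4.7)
The plan is to construct an L-presentation for $H$ directly from a finite presentation of $G$ and verify it by forming a semidirect product and comparing with $G$ via Dyck's theorem. Since $G/H\cong\field{Z}$ is free, the extension $1\to H\to G\to\field{Z}\to 1$ splits, so $G=H\rtimes\langle t\rangle$ for some $t\in G$. Let $X=\{x_1,\ldots,x_k\}$ generate $H$; then $G$ is finitely presented on $X\cup\{t\}$, say $G=\gp{X\cup\{t\}\mid R}$ with $R$ finite. Since $t^{\pm1}x_it^{\mp1}\in H$, I may enlarge $R$ to contain relators $r_i^+=tx_it^{-1}w_i^{-1}$ and $r_i^-=t^{-1}x_itv_i^{-1}$ for fixed $w_i,v_i\in F(X)$ representing these elements. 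These define free-group endomorphisms $\sigma_+,\sigma_-\colon F(X)\to F(X)$ by $\sigma_+(x_i)=w_i$, $\sigma_-(x_i)=v_i$, and a direct induction gives $\phi(u)=t^N u t^{-N}$ in $G$ for every $u\in F(X)$ and every composition $\phi\in\Phi^*:=\{\sigma_+,\sigma_-\}^*$ of signed length $N$.

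Using $r_i^\pm$ I can push all occurrences of $t^{\pm1}$ to the right in each remaining relator $r_j\in R\setminus\{r_i^\pm\}$, producing a word $\tilde r_j\in F(X)$ with $\tilde r_j=1$ in $H$ (the $t$-exponent sum of $r_j$ vanishes, as $r_j$ maps to $0$ in $G/H=\field{Z}$). I then propose the ascending finite L-presentation
$$\hat H=\gp{X\mid\emptyset\mid R_0\mid\{\sigma_+,\sigma_-\}},\qquad R_0:=\{\tilde r_j\}\cup\{\sigma_+\sigma_-(x_i)x_i^{-1},\ \sigma_-\sigma_+(x_i)x_i^{-1}:1\le i\le k\},$$
which uses two endomorphisms. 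Every element of $\Phi^*(R_0)$ is trivial in $H$ (since $\phi(r)=t^N r t^{-N}=1$ there), so there is a natural surjection $\hat H\twoheadrightarrow H$; the task is to prove injectivity.

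For this I form a semidirect product and compare with $G$. Because the L-presentation is ascending, hence invariant, $\sigma_\pm$ descend to endomorphisms $\hat\sigma_\pm$ of $\hat H$, and the relators $\sigma_\pm\sigma_\mp(x_i)x_i^{-1}\in R_0$ force $\hat\sigma_+$ and $\hat\sigma_-$ to be mutually inverse automorphisms of $\hat H$. Hence $\hat G:=\hat H\rtimes_{\hat\sigma_+}\field{Z}$ is well-defined, with presentation $\gp{X\cup\{t\}\mid r_i^+,\,\phi(r)\ (\phi\in\Phi^*,\ r\in R_0)}$. Two applications of Dyck's theorem then yield mutually inverse homomorphisms $G\leftrightarrows\hat G$: the direction $G\to\hat G$ uses the rewriting $r_j\equiv\tilde r_j$ (valid in $\hat G$ since $r_i^\pm$ hold there, the second via $\hat\sigma_-=\hat\sigma_+^{-1}$), while $\hat G\to G$ uses $\phi(r)=t^N r t^{-N}=1$. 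Restricting the resulting isomorphism to the subgroups generated by $X$ yields $\hat H\cong H$. The main obstacle is making the semidirect product $\hat H\rtimes\field{Z}$ well-defined: $\sigma_+$ and $\sigma_-$ are genuinely not inverses on the free group $F(X)$, so the extra relators $\sigma_\pm\sigma_\mp(x_i)x_i^{-1}$ in $R_0$ are indispensable for turning $\hat\sigma_\pm$ into automorphisms of $\hat H$; once that is arranged, the comparison with $G$ is a routine check.
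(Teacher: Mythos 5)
Your proof is correct, and it certifies the presentation by a genuinely different route from the paper. The paper works from the bottom up: it applies Reidemeister--Schreier to $H\le G$ with transversal $\{t^i\}$, obtaining a presentation of $H$ on the infinite family $a_{j,i}=t^{-i}a_jt^i$ with relators $r_{k,i}$, then uses finite generation of $H$ to compress everything onto the finitely many generators with $|i|\le N$ via a rewriting map $\gamma$; the two endomorphisms are $\gamma\circ s^{\pm1}$ for $s(h)=t^{-1}ht$, and the identification with the L-presentation is done by explicit mutually inverse homomorphisms between presentations of $H$ (Lemmas 4--6). You instead write the candidate L-presentation directly from a finite presentation of $G$ adapted to the splitting and verify it from above: you rebuild $\hat G=\hat H\rtimes_{\hat\sigma_+}\field{Z}$, show $\hat G\cong G$ by two applications of Dyck's theorem, and read off $\hat H\cong H$ as the subgroup generated by $X$. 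This avoids the infinite intermediate presentation and the $\gamma$-bookkeeping entirely, at the cost of having to make $\hat\sigma_{\pm}$ honest automorphisms of $\hat H$ before the semidirect product exists --- which you handle correctly by adjoining the relators $\sigma_{\pm}\sigma_{\mp}(x_i)x_i^{-1}$ to $R_0$ (the paper needs no such relators because in its verification the identities $\eta\tau=\tau\eta=\mathrm{id}$ are only ever invoked in $H$ itself, where they hold automatically; your extra relators are harmless since they are trivial in $H$). The individual steps all check out: $exp_t(r_j)=0$ because $r_j$ dies in $G/H\cong\field{Z}$, so $\tilde r_j$ exists and the rewriting $r_j=\tilde r_j$ is valid in any group satisfying $r_i^{\pm}$; ascending implies invariant, so $\sigma_{\pm}$ descend to $\hat H$; and $\phi(u)=t^Nut^{-N}$ in $G$ gives triviality of $\Phi^*(R_0)$ where needed. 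A side benefit of your argument is that it makes transparent both why the presentation is ascending and why its endomorphisms induce automorphisms of $H$, which the paper only records afterwards in Remark 1.
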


\begin{proof}
Suppose that for $t\in G$ we have  $G/H=\gp{tH}$, then $G$ has the form of a semidirect product $G=H \rtimes \gp{t}$.

\medskip
From Neumann's Theorem \cite{MR1243634} (Page 52 ) it follows that $G$ has a presentation of the form $$G=\gp{t,a_1,\ldots,a_m \mid r_1,\ldots,r_n} $$

where $$H=\gp{a_1,\ldots,a_m}^\#_G$$ and $$exp_t(r_k)=0 $$

Consequently, the set $$T=\{ t^i \mid i \in \field{Z}\} $$ is a right Schreier transversal for $H$ in $G$.

\smallskip

Following the Reidemeister-Schreier process for $H$, we can take the elements 
$$a_{j,i}=t^{-i}a_jt^{i} \qquad  \quad  j=1,\ldots,m \;\; i \in \field{Z}$$ as generators for $H$  and the words

$$r_{k,i}=\rho(t^{-i}r_kt^{i})\qquad \quad k=1,\ldots,n \;\; i \in \field{Z} $$
as relators, where $\rho$ is the rewriting of $t^{-i}r_kt^{i}$ as a word in the $a_{j,i}$' s. So, $H$ has the presentation 

\begin{equation}
 H=\gp{a_{j,i} \quad (j=1,\ldots,m \;\; i \in \field{Z}) \mid  r_{k,i} \quad (k=1,\ldots,n \;\; i \in \field{Z})}
\end{equation}
Each $ r_k$ is a word of the form 
$$r_k=\prod_{s=1}^{n_k} t^{-l_s}a_{z_s}t^{l_s}$$
where $a_{z_s} \in \{a_j, j=1,\ldots,m \}^\pm $ and $n_k\in\field{N},\;\; l_s \in \field{Z} $. Therefore we have

$$r_{k,0}=\rho(r_k)=\rho(\prod_{s=1}^{n_k} t^{-l_s}a_{z_s}t^{l_s})
=\prod_{s=1}^{n_k} a_{z_s,l_s}$$
 and
\begin{equation}
 r_{k,i}=\rho(t^{-i}r_kt^i)=\prod_{s=1}^{n_k} a_{z_s,l_s+i}\quad i \in \field{Z}
\end{equation}

The map $$ s : H \longrightarrow H $$ defined by $s(h)=t^{-1}ht$ is clearly an automorphism of $H$. With respect to  presentation (2) 
of $H$, $s$ becomes $s(a_{j,i})=a_{j,i+1}$.

\medskip
Let $F$ be the free group on $\{a_{j,i} \quad j=1,\ldots,m \;\; i \in \field{Z} \}$. We will denote again by $s$ the automorphism of $F$ 
sending $a_{j,i}$ to $a_{j,i+1}$.

\medskip
Since by assumption $H$ is finitely generated, we can select a big enough natural number $N$ with the following properties:

\begin{itemize}
 \item $H=\gp{a_{j,i} \quad (j=1,\ldots,m)\quad |i|\leq N}$
\item Each word $r_{k,0}$ is a word in $\{a_{j,i} \quad j=1,\ldots,m\quad 
|i|\leq N \}^{\pm}$
\end{itemize}

So, each $a_{j,i}$  can be represented by a word in the finite generating set $\{a_{j,i} \quad j=1,\ldots,m\quad |i|\leq N \}^{\pm}$.

\medskip

For each $a_{j,i}$ we will recursively  construct a word $\gamma(a_{j,i})$ in this new finite generating set  which represents $a_{j,i}$ in $H$.

\medskip

For $a_{j,i}$ with $|i|\leq N$ we simply define $\gamma(a_{j,i})$ to be $a_{j,i}$.

\medskip

Pick $\gamma(a_{j,N+1})$ and $\gamma(a_{j,-(N+1)})$ two words in $\{a_{j,i}\mid  j=1,\ldots,m\quad |i|\leq N \}^{\pm}$
representing $a_{j,N+1}$ and $a_{j,-(N+1)}$ in $H$ respectively.

\medskip

For $i \geq N+1$ we define $\gamma(a_{j,i+1})$ recursively as follows:

$$\gamma(a_{j,i+1})=\gamma(s(\gamma(a_{j,i})))$$

(for a word $w$, we define $\gamma(w)$ as the word obtained by applying $\gamma$ to each letter of $w$). Note that $s(\gamma(a_{j,i}))$ 
is a word in $\{a_{j,i}\mid  j=1,\ldots,m\quad |i|\leq N+1 \}^{\pm}$ therefore we can apply $\gamma$ to it.

\medskip

Similarly for $i \leq -(N+1)$ we define $\gamma(a_{j,i-1})$ as

$$\gamma(a_{j,i-1})=\gamma(s^{-1}(\gamma(a_{j,i}))) $$

Defining $\gamma$ as above gives the following equalities in the free group $F$:

\begin{equation}
\gamma(a_{j,i+1})=\gamma(s(\gamma(a_{j,i}))) \quad \text{for} \quad i \geq -N
\end{equation}

and
\begin{equation}
  \gamma(a_{j,i-1})=\gamma(s^{-1}(\gamma(a_{j,i}))) \quad \text{for} \quad i \leq N
\end{equation}

\begin{lem}
 $H$ has the presentation 
$$\gp{a_{j,i} (j=1,\ldots ,m\quad |i|\leq N) \mid \gamma(r_{k,i}) (k=1,\ldots,n\quad i \in \field{Z})} $$
\end{lem}

\begin{proof}
 This follows by Tietze transformations, but we will explicitly construct an isomorphism between these presentations. In order to avoid confusion, 
we denote elements in the asserted presentation with bars and set
$$\overline{H}= \gp{\overline{a_{j,i}} (j=1,\ldots ,m\quad |i|\leq N) \mid \overline{\gamma(r_{k,i})} (k=1,\ldots,n\quad i \in \field{Z})} $$
We will show that $\overline{H}\cong H$ using the presentation (2) of $H$. For this define: 
$$
\begin{array}{cccc}
 \varphi:&  H &  \longrightarrow & \overline{H} \\
 & a_{j,i}& \mapsto & \overline{\gamma(a_{j,i})}
\end{array}
$$

We have $\varphi(r_{k,i})=\overline{\gamma(r_{k,i})}=1$ in $\overline{H}$. So $\varphi$ maps relators of $H$
to relators in $\overline{H}$ and  hence is a well  defined group homomorphism. Conversely  define :

$$
\begin{array}{cccc}
 \psi:&  \overline{H} &  \longrightarrow & H\\
 & \overline{a_{j,i}}& \mapsto & a_{j,i}
\end{array}
$$

Since $\gamma(a_{j,i})=a_{j,i}$ in $H$ we have

  $$\psi(\overline{\gamma(r_{k,i})})=\gamma(r_{k,i})=r_{k,i}=1\quad \text{in}\quad H$$  
which shows that $\psi$ is a well defined group homomorphism.

Finally the following equalities show that $\varphi$ and $\psi$ are mutual inverses:
$$
(\varphi \circ \psi)(\overline{a_{j,i}})=\varphi(a_{j,i})=\overline{\gamma(a_{j,i})}=\overline{a_{j,i}}
$$
 
(where the last equality is true since $|i|\leq N$ in this case.)

$$(\psi \circ \varphi)(a_{j,i})=\psi(\overline{\gamma(a_{j,i})})=\gamma(a_{j,i})=a_{j,i} \quad 
\text{in}\quad H $$

Hence  $\overline{H}$ is isomorphic to $H$.

$\Box$
\end{proof}

 Let $F_r$ be the free group with generators $\{a_{j,i}\mid  j=1,\ldots,m\quad |i|\leq N \}$.
Define two endomorphisms $\eta$ and $\tau$ of $F_r$  as follows:
$$ \eta(a_{j,i})=\gamma(s(a_{j,i}))=\gamma(a_{j,i+1})$$ and

$$ \tau(a_{j,i})=\gamma(s^{-1}(a_{j,i}))=\gamma(a_{j,i-1})$$

where $\gamma$ is as above. Note that $\eta$ and $\tau$ induce the automorphisms $s$ and $s^{-1}$ of $H$ respectively.

\begin{lem}
 
In $F_r$ we have the equality

$$
 \gamma(r_{k,i}) = \left\{
     \begin{array}{lr}
       \eta^i(r_{k,0}) & \text{ if} \quad i \geq 0\\
       \tau^{-i}(r_{k,0}) & \text{if} \quad i < 0
     \end{array}
   \right.
$$

\end{lem}

\begin{proof}
 Suppose $i\geq 0$. We use induction on $i$.

\medskip

If $i=0$, $\gamma(r_{k,0})=r_{k,0}$ by choice of $\gamma$ and the natural number $N$. 
Suppose the equality holds for $i$. Then

$$
\begin{array}{cclc}
\eta^{i+1}(r_{k,0})& = &\eta(\eta^i(r_{k,0}))& \\
\\
 &=&\eta(\gamma(r_{k,i}))&(\text{by induction hypothesis}) \\
\\
&= &\eta(\gamma(\prod a_{z_s,l_s+i}))&  (\text{using equation (3)})\\
\\
&=& \prod \eta(\gamma(a_{z_s,l_s+i}))&\\
\\
&=& \prod \gamma s \gamma (a_{z_s,l_s+i})&\\
\\
&=& \prod \gamma(a_{z_s,l_s+i+1})& (\text{using equation (4), since $|l_s|\leq N$} )\\
\\
&=& \gamma (\prod a_{z_s,l_s+i+1})&\\
\\
& =& \gamma(r_{k,i+1})
\end{array}
$$

A similar argument with induction on $-i$ (and using equation (5)) shows the required 
identity for $i<0$.
\end{proof}
$\Box$
\begin{lem}
 $H$ has the following ascending finite L-presentation:

$$\gp{a_{j,i}\quad (j=1,\ldots ,m\quad |i|\leq N) \mid r_{k,0}\quad  k=1,\ldots,n \mid \{ \eta ,\tau\}} $$
 
\end{lem}

\begin{proof}
 Again not to cause confusion we denote the asserted presentation with bars and set
$$\overline{H}=\gp{\overline{a_{j,i}}\quad (j=1,\ldots ,m\quad |i|\leq N) \mid \overline{r_{k,0}}\quad  k=1,\ldots,n 
\mid \{ \overline{\eta} ,\overline{\tau}\}} $$
where $\overline{\eta},\overline{\tau}$ are endomorphisms of the free group $\overline{F_r}$ 
analogous to $\eta$ and $\tau$. More precisely:

$$\overline{\eta}(\overline{a_{j,i}})=\overline{\eta(a_{j,i})} $$
$$\overline{\tau}(\overline{a_{j,i}})=\overline{\tau(a_{j,i})} $$

We will show that $\overline{H}\cong H$ and we will use the presentation of $H$

$$\gp{a_{j,i} (j=1,\ldots ,m\quad |i|\leq N) \mid \gamma(r_{k,i}) (k=1,\ldots,n\quad i \in \field{Z})} $$

which was found in  Lemma 4. To this end define:

$$
\begin{array}{cccc}
 \phi:&  H &  \longrightarrow & \overline{H} \\
 & a_{j,i}& \mapsto & \overline{a_{j,i}}
\end{array}
$$
 We have
$$
\phi(\gamma(r_{k,i})) = \overline{\gamma(r_{k,i})}=
\left\{
     \begin{array}{lr}
       \overline{\eta}^i(\overline{r_{k,0}}) & \text{ if} \quad i \geq 0\\
       \overline{\tau}^{-i}(\overline{r_{k,0}}) & \text{if} \quad i < 0
     \end{array}
   \right.
$$

by lemma 5. Hence $\phi$ is a well defined group homomorphism. Conversely define:

$$
\begin{array}{cccc}
 \chi:&  \overline{H} &  \longrightarrow & H\\
 & \overline{a_{j,i}}& \mapsto & a_{j,i}
\end{array}
$$
To show that $\chi$ is well defined, we need to prove that for all $\overline{f}\in \{\overline{\eta},
\overline{\tau}\}^*$
 and for all $k=1,\ldots,n$ we have:
$$ \chi(\overline{f}(\overline{r_{k,0}}))=1\quad \text{in}\quad H$$

This is true since $\eta$ and $\tau$ (and hence $f$) induce isomorphisms on $H$. This shows that 
$\chi$ is a well defined group homomorphism. Clearly $\phi$ and $\chi$ are mutual inverses.
\end{proof} $\Box$

Hence we have proven  theorem 1.
\end{proof}

\begin{thm}
 Let G be a  finitely presented indicable group not containing a free semigroup of rank 2. Then 
$G$ has the form of a semi direct product $H \rtimes \field{Z}$ where $H$ has ascending  finite  L-presentation.
\end{thm}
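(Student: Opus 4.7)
The plan is to deduce Theorem 2 as a straightforward corollary of Theorem 1 by producing an appropriate finitely generated normal subgroup with infinite cyclic quotient, and then observing that the extension splits.

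First, since $G$ is indicable, there is a surjective homomorphism $\varphi : G \twoheadrightarrow \field{Z}$. Let $H = \ker \varphi$, so we have a short exact sequence
\[ 1 \longrightarrow H \longrightarrow G \stackrel{\varphi}{\longrightarrow} \field{Z} \longrightarrow 1. \]
Since $\field{Z}$ is abelian (hence solvable) and $G$ contains no free subsemigroup of rank 2, Lemma 3 applies and tells us that $H$ is finitely generated. Because $\field{Z}$ is free, the sequence splits: choose any $t \in G$ with $\varphi(t) = 1$, and the subgroup $\gp{t}$ is infinite cyclic and complements $H$, giving $G = H \rtimes \gp{t} \cong H \rtimes \field{Z}$.

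Now $G$ is finitely presented, $H$ is a finitely generated normal subgroup of $G$, and $G/H \cong \field{Z}$ is infinite cyclic. All hypotheses of Theorem 1 are satisfied, so $H$ admits an ascending finite L-presentation (indeed with only two free group endomorphisms). Combined with the semidirect product decomposition above, this yields the statement of Theorem 2.

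There is no substantial obstacle here: the two nontrivial inputs, namely the finite generation of $H$ (Lemma 3) and its ascending finite L-presentation (Theorem 1), have already been established. The only points that need a sentence of justification are why $H$ is finitely generated (this is precisely where the absence of a free subsemigroup of rank 2 is used, rather than merely the absence of a free subgroup of rank 2) and why the extension splits (freeness of $\field{Z}$).
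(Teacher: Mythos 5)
Your proposal is correct and follows exactly the paper's route: the paper also deduces Theorem 2 directly from Theorem 1 together with Lemma 3, with the kernel of the surjection onto $\field{Z}$ serving as $H$. Your write-up simply makes explicit the two details (finite generation of $H$ via Lemma 3, and the splitting of the extension) that the paper leaves to the reader.
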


\begin{proof}
 Follows directly from theorem 1 and lemma 3.
\end{proof}
$\Box$

\bigskip

\textbf{Some Remarks:}

\smallskip

\textbf{1)} As mentioned in the introduction, groups with invariant finite L-presentations embed nicely 
into finitely presented groups via HNN extensions.
In our special case (i.e. a presentation for $H$ is obtained via theorem 1), the endomorphisms of 
the L-presentation of $H$ actually induce automorphism of $H$ and $H$ embeds into  $G$ as a normal subgroup. 

\medskip

\textbf{2)} Though all finitely generated recursively presented groups embed into finitely presented groups, I have been told by Mark Sapir (private communication) that not all finitely 
generated recursively presented groups embed 
into finitely presented groups \textit{as normal subgroups}. His example was the first Grigorchuk group. This shows that even finitely
L-presented groups may fail to be normal subgroups of finitely presented groups. This indicates that such groups have a rather restricted structure.
Hence a natural question is what additional 
structure finitely generated \textit{normal} subgroups of finitely presented groups have. One answer could be given if one can generalize theorem 1 
to arbitrary finitely generated normal subgroups. One would obtain a characterization in the following sense:

\smallskip

\textit{A finitely generated group is a normal subgroup of a finitely presented group if and only if it has an ascending finite L-presentation where the endomorphisms induce automorphisms of the group.}

\medskip

Therefore we would like to formulate the question  whether Theorem 1 can be generalized to arbitrary finitely generated normal subgroups.
 
\medskip
\textbf{3)} We would like to present a concrete example in which Theorem 1 can be used. This is also a counter example to the assertion
(as written in \cite{MR2009317} Theorem 2.16) that all finitely L-presented groups have the Schur Multiplier the direct product of finitely generated
abelian groups. Upon discussing with the author of \cite{MR2009317} it was observed that one needs one additional hypothesis.

\medskip

Let $G$ be the group given by the presentation $$\gp{a,b,t,u \mid [a,b],[a,u],[t,b],[t,u],a^t=a^2,b^u=b^2}$$ which is the direct square
of the Baumslag-Solitar group $BS(1,2)$. Let $z=tu^{-1}$ and consider the subgroup $H=\gp{a,b,z}$ which is normal and has infinite
cyclic quotient. Then following theorem 1 one arrives at the following finite L-presentation for $H$:

$$\gp{a,b,z  \mid[a,b],a^z=a^2, (b^2)^z=b\mid \{\eta,\tau\}} $$

where 

$$
\eta = \left\{
\begin{array}{ccc}
   a & \mapsto & a^2\\
   b & \mapsto  & b \\
   z & \mapsto & z \\
  \end{array}
\right.$$

and

$$
\tau = \left\{
\begin{array}{ccc}
   a & \mapsto & zaz^{-1}\\
   b & \mapsto  & b \\
   z & \mapsto & z \\
  \end{array}
\right.$$

\medskip

Now since $BS(1,2)=\field{Z}[\frac12] \rtimes \field{Z}$ we have $H=\field{Z}[\frac12]^2 \rtimes \field{Z}$ and using Shapiro's
lemma one can see that $H_2(H,\field{Z}) \cong \field{Z}[\frac 12]$.

\medskip

\textbf{4)} Another problem of interest is the structure of finitely generated subgroups of finitely L-presented groups. For finite index subgroups  one 
has a Reidemeister-Schreier algorithm to compute a finite L-presentation for the subgroup (See \cite{hartung}).
For other subgroups it would be nice to investigate whether analogous  statements similar to Theorem 1 hold.
\medskip

\textbf{Acknowledgements:}

\medskip

I wish like to thank my advisor Rostislav Grigorchuk for his valuable comments and helpful discussions. 
I also want to thank Laurent Bartholdi for nice remarks and suggestions. I am grateful to Ben Wieland for suggesting
the example in remark 3.

\bibliography{GMJ-11-0168.bib}
\end{document}